\newcommand{\vD}{\pmb{D}}
\newcommand{\vT}{\pmb{L}}
\newcommand{\vectorp}{\pmb{p}}
\newcommand{\mSigma}{\pmb{\Sigma}}
\newcommand{\vectormu}{\pmb{\mu}}
\newcommand{\event}{\mathcal{E}}
\newcommand{\vX}{\pmb{X}}
\newcommand{\indicator}{\mathbb{I}}
\newcommand{\Prob}{\mathbb{P}}
\newcommand{\convD}{\, \overset{D}{\longrightarrow} \,}
\newcommand{\normal}{{\rm N}}
\newcommand{\F}{\mathbb{F}}
\newcommand{\given}{\, | \,}
\newcommand{\E}{\mathbb{E}}
\newcommand{\Var}{\mathbb{V}{\rm ar}}
\newcommand{\Cov}{\mathbb{C}{\rm ov}}
\newcommand{\Given}{\, \Bigg{|} \,}
\newcommand{\Polya}{P\'{o}lya}
\newtheorem{theorem}{Theorem}
\newtheorem{corollary}{Corollary}
\newtheorem{lemma}{Lemma}
\newtheorem{proposition}{Proposition}
\begin{document}  
\begin{center}
{\huge \bf The degree profile and Gini index of random caterpillar trees}

\bigskip
{\large \bf Panpan Zhang\footnote{Department of Statistics, University of Connecticut, 215 Glenbrook Road U-4120, Storrs, CT 06269-4120, U.S.A. Tel.: +1(860)486-3414. Fax: +1(860)486-4113} \qquad and \qquad
Dipak K.\ Dey{${}^1$}}

\bigskip

\today
\end{center}

\bigskip\noindent
{\bf Abstract:}
In this paper, we investigate the degree profile and Gini index of random caterpillar trees (RCTs). We consider RCTs which evolve in two different manners: uniform and nonuniform. The degrees of the vertices on the central path (i.e., the degree profile) of a uniform RCT follow a multinomial distribution. For nonuniform RCTs, we focus on those growing in the fashion of preferential attachment. We develop methods based on stochastic recurrences to compute the exact expectations and the dispersion matrix of the degree variables. A generalized \Polya\ urn model is exploited to determine the exact joint distribution of these degree variables. We apply the methods from combinatorics to prove that the asymptotic distribution is Dirichlet. In addition, we propose a new type of Gini index to quantitatively distinguish the evolutionary characteristics of the two classes of RCTs. We present the results via several numerical experiments.

\bigskip\noindent
{\bf Keywords:} Degree profile, Gini index, Monte-Carlo experiment, \Polya\ urn model, random caterpillar trees, stochastic recurrence

\section{Introduction}
\label{Sec:intro}
A {\em caterpillar tree} is a tree in which every vertex has distance at most $1$ from a {\em central path}. The central path of a caterpillar tree is also called the {\em spine} of the tree and it is obtained by removing all {\em endpoint vertices} in the tree. There are different names for endpoint vertices in the literature; for example, terminal vertices, monovalent vertices, leaves, and legs. An explanatory example of a caterpillar tree is given in Figure~\ref{Fig:caterpillar}, in which the central path consists of $5$ vertices, and there are respectively $2$, $1$, $2$, $0$, and $1$ leaves connected to each of them, from left to right.
\begin{figure}[tbh]
	\begin{center}
		\begin{tikzpicture}[scale=3]
		\draw
		(-1.25, 0) node [very thick,circle=0.1,draw] {$p_1$}
		(-0.5, 0) node [very thick,circle=0.1,draw] {$p_2$}
		(0.25, 0) node [very thick,circle=0.1,draw] {$p_3$}
		(1, 0) node [very thick,circle=0.1,draw] {$p_4$}
		(1.75, 0) node [very thick,circle=0.1,draw] {$p_5$};
		\draw[very thick]
		(-1.11, 0)--(-0.64, 0)
		(-0.36, 0)--(0.11, 0)
		(0.39, 0)--(0.86, 0)
		(0.64, 0)--(0.86, 0)
		(1.14, 0)--(1.61, 0);
		\draw
		(-1.5, -0.5) node [circle=0.1,draw] {$l_1$}
		(-1, -0.5) node [circle=0.1,draw] {$l_2$}
		(-0.5, -0.5) node [circle=0.1,draw] {$l_3$}
		(0, -0.5) node [circle=0.1,draw] {$l_4$}
		(0.5, -0.5) node [circle=0.1,draw] {$l_5$}
		(1.75, -0.5) node [circle=0.1,draw] {$l_6$};
		\draw
		(-1.3, -0.13)--(-1.5, -0.36)
		(-1.2, -0.13)--(-1, -0.36)
		(-0.5, -0.14)--(-0.5, -0.36)
		(0.2, -0.13)--(0, -0.36)
		(0.3, -0.13)--(0.5, -0.36)
		(1.75, -0.14)--(1.75, -0.36);
		\end{tikzpicture}
		\caption{An example of a caterpillar tree ($\mathcal{C}(2, 1, 2, 0, 1)$): the central path and the vertices on it are thickened.}
		\label{Fig:caterpillar}
	\end{center} 
\end{figure}
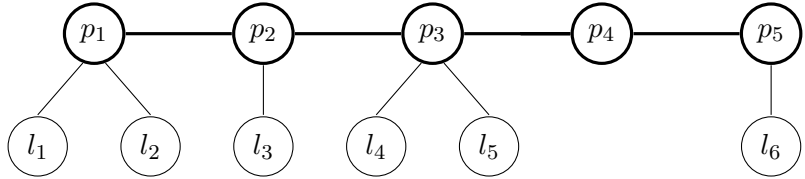

We consider a caterpillar tree with $m \in \mathbb{N}$ (fixed) vertices which are enumerated (from left to right) on the central path. In this paper, the structure of a caterpillar tree is represented by $\mathcal{C}(l_1, l_2, \ldots, l_m)$, where $l_i$ is the number of leaves attached to the node labeled with $i$ on the central path, for $i = 1, 2, \ldots, m$. For example, we denote the caterpillar tree in Figure~\ref{Fig:caterpillar} as $\mathcal{C}(2, 1, 2, 0, 1)$. Caterpillar tree was probably first named by Arthur M.\ Hobbs in~\cite{Hobbs}. Early works on caterpillar trees appeared in combinatorial graph theory~\cite{Harary, Jamison, Miller}. The motivation of this paper originates from the fact that caterpillar trees have found applications in many scientific and applied disciplines. For instance, caterpillar graphs are used to uncover chemical and physical properties of benzenoid hydrocarbons in~\cite{ElBasil}; Caterpillar trees are adopted to model information flow trees in~\cite{Allen}; An polynomial algorithm which determines the total interval number of caterpillar trees is developed in~\cite{Raychaudhuri}; Leaf realization problems for caterpillar trees are investigated in~\cite{Masse}.

Due to the surge of interests in random graphs and algorithms, we incorporate randomness and caterpillar structure, and look into random caterpillar trees (RCTs), which evolve in the following manner. At time $0$, we start with a central path consisting of $m$ vertices, which are enumerated from left to right. At each time point $n \ge 1$, a leaf vertex joins in the tree, and is linked to one of the vertices on the central path via an (undirected) edge according to certain rules, which will be introduced in detail in the sequel. Specifically in this paper, we investigate the degree profile of RCTs and propose a Gini-type index which quantitatively characterizes the evolution of RCTs.

The rest of the manuscript is organized as follows. We study the degree profile of RCTs in Section~\ref{Sec:RCT}, which is divided into two subsections. In Section~\ref{Subsec:URCT}, we look into uniform RCTs, and find that the degree distribution is multinomial. In Section~\ref{Subsec:PARCT}, we place focus on preferential attachment RCTs. We develop stochastic recurrences to compute the first two moments of the degree variables exactly, and exploit a well-known probabilistic model---\Polya\ urn model---to determine the exact degree distribution. In what follows, we show that the asymptotic joint distribution for those degree variables (after properly scaled) is Dirichlet. In Section~\ref{Sec:gini}, we propose a Gini-type index to characterize the evolution of the two classes of RCTs. We show that the proposed Gini index (versus another type of Gini index introduced in~\cite{Balaji}) successfully distinguishes the two classes of RCTs via several simulation studies. Finally, we add some concluding remarks in Section~\ref{Sec:conc} and propose some future work.

\section{Degree profile of random caterpillar trees}
\label{Sec:RCT}
In this section, we investigate the degree profile of RCTs. In graph theory, the {\em degree} of a vertex is the number of edges incident to the vertex. For each of the vertices on the central path of a RCT, its degree is composited by two parts: the number of adjoint leaves and the number of links on the central path. The former is random, while the latter is deterministic: $1$ for the vertices at the two endpoints of the spine, and $2$ for the rest. Let $\vD_n = (D_{1, n}, D_{2, n}, \ldots, D_{m, n})^{\top}$ be the random vector that represents the degree profile of the $m$ vertices on the central path of a RCT at time $n$. The structure of a RCT is analogously represented by $\mathcal{C}(L_{1, n}, L_{2, n}, \ldots, L_{m, n})$, where $L_{1, n}, L_{2, n}, \ldots, L_{m, n}$ are respectively the random variables which refer to the number of leaves attached to each of the $m$ vertices on the spine at time $n$. The composition of these random variables is denoted by a random vector $\vT_n = (L_{1, n}, l_{2, n}, \ldots, L_{m, n})^{\top}$. There is an instant relationship between $\vD_n$ and $\vT_n$ for all $n \ge 0$:
\begin{equation}
\vD_n = \vT_n + (1, 2, 2, \ldots, 2, 2, 1)^{\top}. \label{Eq:relation}
\end{equation}

We consider two types of RCTs: uniform random caterpillar trees ($\mathcal{C}^{(U)}(L_{1, n}, L_{2, n}, \ldots, L_{m, n})$) and preferential attachment random caterpillar trees ($\mathcal{C}^{(P)}(L_{1, n}, L_{2, n}, \ldots, L_{m, n})$), which are distinguished by the features of their growth. The evolution of uniform RCTs is analogous to that of random recursive trees. At each time point $n \ge 1$, a vertex on the central path is chosen uniformly at random (all vertices being equally likely) and connected with a newcomer via an edge. Preferential attachment RCTs grow in a nonuniform way, inspired from the seminal paper~\cite{Barabasi}. At time $n \ge 1$, the probability of a vertex on the central path being selected for a newcomer is proportional to its degree in the tree at time $n - 1$. Mathematically, given $\mathcal{C}^{(P)}(L_{1, n - 1}, L_{2, n - 1}, \ldots, L_{m, n - 1})$, the probability that the vertex labeled with $i$, for $i = 1, 2, \ldots, m$, on the central path is chosen for the new leaf vertex (newcomer) at time $n$ is
$$\frac{D_{i, n - 1}}{\sum_{i = 1}^{m} D_{i, n - 1}} = \frac{L_{i, n - 1} + \indicator_{(\{i = 1\} \cup \{i = m\})} + 2 \, \indicator_{\{2 \le i \le m - 1\}}}{\sum_{i = 1}^{n} L_{i, n - 1} + 2m - 2},$$
where $\indicator_{(\cdot)}$ denotes an indicator function.

\subsection{Uniform random caterpillar trees}
\label{Subsec:URCT}
The degree profile of uniform RCTs is trivial, recovered by some well-known results from fundamental probability theory. The growth of uniform RCTs coincides with an experiment of $n$ independent trials, each of which leads to a choice for one of $m$ candidates, with every candidate having a fixed success rate $1/m$. The associated distribution is a {\em multinomial distribution} with parameters $n$ and $\vectorp = (1/m, 1/m, \ldots, 1/m)$. We thus obtain the joint probability mass function of $L_{1, n}, L_{2, n}, \ldots, L_{m, n}$:
$$\Prob(L_{1, n} = l_1, L_{2, n} = l_2, \ldots, L_{m, n} = l_m) = \binom{n}{l_1, l_2, \cdots, l_m} \left(\frac{1}{m}\right)^n,$$
for nonnegative integers $l_1, l_2, \cdots, l_m$, and $\sum_{i = 1}^{m} l_i = n$. According to the relation between $\vD_n$ and $\vT_n$ in Equation~(\ref{Eq:relation}), we get the joint distribution of $D_{1, n}, D_{2, n}, \ldots, D_{m, n}$; namely,	$$\Prob(D_{1, n} = d_1, D_{2, n} = d_2, \ldots, D_{m, n} = d_m) = \binom{n + 2m - 2}{d_1, d_2, \cdots, d_m} \left(\frac{1}{m}\right)^{n + 2m - 2},$$
for integers $d_1 \ge 1, d_m \ge 1$, $d_2, d_3, \ldots, d_{m - 1} \ge 2$ and $\sum_{i = 1}^{m} d_i = n + 2m - 2$.

Several limiting distributions of linear functions of multinomial distributed random variables are given in~\cite{Fisz}. Following the results in~\cite[Theorem 1]{Fisz}, we obtain the limiting distribution of $\vT_n$. As $n \to \infty$, we have
$$\frac{\vT_n - n \vectorp}{\sqrt{n}} \convD \normal_{m}({\bf 0}, \mSigma^{(U)}),$$
where $\vectorp = (1/m, 1/m, \ldots, 1/m)^{\top}$, and the dispersion matrix $\mSigma^{(U)}$ is
$$\begin{pmatrix}
(m - 1)/m^2 & -1/m^2 &\cdots & -1/m^2
\\ -1/m^2 & (m - 1)/m^2 &\cdots & -1/m^2
\\ \vdots & \vdots & \ddots & \vdots
\\ -1/m^2 & -1/m^2 & \cdots & (m - 1)/m^2
\end{pmatrix}.$$
Accordingly, the asymptotic distribution $\vD_n$ is normal after properly scaled; that is,
$$\frac{\vD_n - n \vectorp}{\sqrt{n}} \convD \normal_{m}(\vectormu^{(U)}, \mSigma^{(U)}),$$
where $\vectormu^{(U)} = (1, 2, 2, \ldots, 2, 2, 1)^{\top}$.

\subsection{Preferential attachment random caterpillar trees}
\label{Subsec:PARCT}
In contrast to uniform RCTs, preferential attachment RCTs evolve in a flavor of the vertices with higher degrees being more attractive to newcomers. The first consideration of preferential attachment seems to appear in~\cite{Yule}, and one of the most broad applications of preferential attachment is to model the growth of the World Wide Web in~\cite{Barabasi}. In sociology, the phenomenon of preferential attachment is reflected in a well known manifestation: ``the rich get richer and the poor get poorer.'' 

The recruiting candidates for newcomers in uniform RCTs are chosen independently from time to time, while the recruiting process in preferential attachment RCTs at each time point is dependent on the structure of the existing tree at the preceding time point. The strong dependency between the trees at two consecutive time points under the preferential attachment setting makes computation much more challenging.

In this section, we first compute the degree vector $\vD_n$'s first two moments, which would provide us an insight into the distribution of $\vD_n$. Let $\F_n$ be the $\sigma$-field that contains the history of the evolution of a preferential attachment RCT up to time $n$ (i.e., $\F_n$ is the $\sigma$-field generated by $\vD_0, \vD_1, \ldots, \vD_n$). At $n = 0$, the initial condition is
\begin{equation}
\vD_0 = (1, 2, 2, \ldots, 2, 2, 1)^{\top}.
\label{Eq:initial}
\end{equation}
\begin{proposition}
	\label{Prop:mom}
	Let $\mathcal{C}^{(P)}(L_{1, n}, L_{2, n}, \ldots, L_{m, n})$ be a preferential attachment RCT at time $n$, and let $\vD_n = (D_{1, n}, D_{2, n}, \ldots, D_{m, n})^{\top}$ be the random vector that represents the degree profile of the $m$ vertices on the central path. The expectation of $\vD_n$ is
	$$\E[\vD_{n}] = \begin{pmatrix}
	\frac{n}{2(m - 1)} + 1
	\\ \frac{n}{m - 1} + 2
	\\ \vdots
	\\ \frac{n}{m - 1} + 2
	\\ \frac{n}{2(m - 1)} + 1
	\end{pmatrix}.$$
	The dispersion matrix of $\vD_n$, denoted by $\mSigma_n = \bigl(\sigma(n)_{i, j}\bigr)_{i, j = 1}^{m}$, is an $m \times m$ square matrix such that
	$$
	\sigma(n)_{i, i} = \begin{cases}
	\frac{(m - 2)n^2 + 2(m - 2)(m - 1)n}{(m - 1)^2(2m - 1)}, &\qquad 2 \le i \le m - 1,
	\\ \frac{(2m - 3)n^2 + (2m - 3)(2m - 2)n}{4(m - 1)^2(2m - 1)}, &\qquad {i = 1, m}.
	\end{cases}
	$$
	and
	$$
	\sigma(n)_{i, j} = \begin{cases}
	-\frac{n^2 + 2(m - 1)n}{(m - 1)^2(2m - 1)}, &\qquad 2 \le i \neq j \le m - 1,
	\\ -\frac{n^2 + 2(m - 1)n}{2(m - 1)^2(2m - 1)}, &\qquad 
	{{i = 1, m \mbox{ and } 2 \le j \le m - 1} \atop {j = 1, m \mbox{ and } 2 \le i \le m - 1}},
	\\ -\frac{n^2 + 2(m - 1)n}{4(m - 1)^2(2m - 1)}, &\qquad {{i = 1 \mbox{ and } j = m} \atop {j = 1 \mbox{ and } i = m}}.
	\end{cases}
	$$
\end{proposition}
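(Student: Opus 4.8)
The plan is to exploit the one-step stochastic recurrence governing the evolution of the degree profile and to convert it into closed-form recurrences for the first two (joint) moments, which then telescope into the claimed expressions. Write $\delta_{i, n}$ for the indicator that the newcomer arriving at time $n$ attaches to the $i$-th spine vertex, so that $D_{i, n} = D_{i, n - 1} + \delta_{i, n}$ and, by the preferential-attachment rule, $\E[\delta_{i, n} \given \F_{n - 1}] = D_{i, n - 1}/(n + 2m - 3)$, where $n + 2m - 3 = \sum_j D_{j, n - 1}$ is the deterministic total degree at time $n - 1$. The two algebraic facts that make everything tractable are $\delta_{i, n}^2 = \delta_{i, n}$ (each indicator is Bernoulli) and $\delta_{i, n}\delta_{j, n} = 0$ for $i \neq j$ (exactly one vertex is selected per step).

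For the mean, taking conditional expectations gives $\E[D_{i, n} \given \F_{n - 1}] = D_{i, n - 1}\,(n + 2m - 2)/(n + 2m - 3)$, and after taking total expectations this telescopes to $\E[D_{i, n}] = D_{i, 0}\,(n + 2m - 2)/(2m - 2)$. Substituting the initial degrees from Equation~(\ref{Eq:initial})---namely $D_{i, 0} = 2$ for $2 \le i \le m - 1$ and $D_{i, 0} = 1$ for $i \in \{1, m\}$---yields the stated expectation vector directly.

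For the off-diagonal entries I would expand $D_{i, n} D_{j, n} = (D_{i, n - 1} + \delta_{i, n})(D_{j, n - 1} + \delta_{j, n})$ and use $\delta_{i, n}\delta_{j, n} = 0$ to obtain the clean multiplicative recurrence $\E[D_{i, n} D_{j, n}] = \E[D_{i, n - 1} D_{j, n - 1}]\,(n + 2m - 1)/(n + 2m - 3)$, whose product telescopes to $\E[D_{i, n} D_{j, n}] = D_{i, 0} D_{j, 0}\,(n + 2m - 1)(n + 2m - 2)/\bigl((2m - 1)(2m - 2)\bigr)$. Subtracting the product of the two means and simplifying the resulting numerator (which collapses to $-n$) gives $\sigma(n)_{i, j} = -D_{i, 0} D_{j, 0}\,\bigl(n^2 + 2(m - 1)n\bigr)/\bigl(4(m - 1)^2(2m - 1)\bigr)$; the three covariance cases then follow from $D_{i, 0} D_{j, 0} \in \{4, 2, 1\}$.

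The diagonal entries are the only genuinely nontrivial step. Squaring the recurrence and using $\delta_{i, n}^2 = \delta_{i, n}$ produces the inhomogeneous linear recurrence $\E[D_{i, n}^2] = \E[D_{i, n - 1}^2]\,(n + 2m - 1)/(n + 2m - 3) + D_{i, 0}/(2m - 2)$, where the forcing term simplifies because $\E[D_{i, n - 1}]/(n + 2m - 3)$ turns out to be constant in $n$. I would solve this by the summation-factor method: dividing through by the telescoping product $P_n = (n + 2m - 1)(n + 2m - 2)/\bigl((2m - 1)(2m - 2)\bigr)$ and summing, the residual sum $\sum_k 1/P_k$ telescopes via the partial fraction $1/\bigl((k + 2m - 1)(k + 2m - 2)\bigr) = 1/(k + 2m - 2) - 1/(k + 2m - 1)$ to $(2m - 2)n/(n + 2m - 1)$. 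Back-substituting and subtracting $(\E[D_{i, n}])^2$ yields $\sigma(n)_{i, i} = D_{i, 0}\,n\,(n + 2m - 2)\,(2m - 2 - D_{i, 0})/\bigl((2m - 1)(2m - 2)^2\bigr)$, and plugging in $D_{i, 0} = 2$ and $D_{i, 0} = 1$ recovers the two diagonal cases. The main obstacle is thus purely computational---correctly solving the inhomogeneous recurrence and carrying out the algebraic simplification---rather than conceptual, since the indicator identities reduce the joint dynamics to scalar recurrences in each coordinate and each pair.
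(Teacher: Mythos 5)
Your proposal is correct and follows essentially the same route as the paper: the one-step relation $D_{i,n}=D_{i,n-1}+\delta_{i,n}$ with $\E[\delta_{i,n}\given\F_{n-1}]=D_{i,n-1}/(n+2m-3)$, conditional expectations, and telescoping of the resulting first- and second-moment recurrences (your constant forcing term $D_{i,0}/(2m-2)$ and the closed forms $\sigma(n)_{i,j}=-D_{i,0}D_{j,0}\bigl(n^2+2(m-1)n\bigr)/\bigl(4(m-1)^2(2m-1)\bigr)$ and $\sigma(n)_{i,i}=D_{i,0}n(n+2m-2)(2m-2-D_{i,0})/\bigl((2m-1)(2m-2)^2\bigr)$ all check out against the stated entries). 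The only difference is cosmetic: you parametrize uniformly by $D_{i,0}$ and solve one recurrence instead of the paper's case-by-case treatment, which is a tidy streamlining rather than a different method.
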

\begin{proof}
	We look into each of the components in $\vD_n$. For each $1 \le i \le m$ and $n \ge 1$, there is an almost-sure relation between $D_{i, n}$ and $D_{i, n - 1}$:
	\begin{equation}
	D_{i, n} \given \F_{n - 1} = (D_{i, n - 1} + \indicator_{\event_{i, n}}) \given \F_{n - 1},
	\label{Eq:almostrelation}
	\end{equation}
	where $\event_{i, n}$ indicates the event of the vertex labeled with $i$ on the central path being selected for the newcomer at time $n$. Taking expectations on both sides of Equation~(\ref{Eq:almostrelation}), we get
	$$\E[D_{i, n} \given \F_{n - 1}] = D_{i, n - 1} + \frac{D_{i, n - 1}}{2m - 2 + n - 1}.$$
	Taking another expectation, we obtain a recurrence relation for $\E[D_{i, n}]$ with respect to $n$, i.e.,
	$$\E[D_{i, n}] = \frac{n + 2m - 2}{n + 2m - 3} \, \E[D_{i, n - 1}].$$
	Solving the recurrence relation with the initial condition given in Equation~(\ref{Eq:initial}), we obtain the result for the first moment as stated in the proposition.
	
	Towards the dispersion matrix of $\vD_n$, we again appeal to the stochastic relation established in Equation~(\ref{Eq:almostrelation}) to compute the second moments of $D_{i, n}$'s for $1 \le i \le m$ and the mixed moments of $D_{i, n}$ and $D_{j, n}$ for $0 \le i \neq j \le m$.
	
	For each fixed $1 \le i \le m$, we square both sides of Equation~(\ref{Eq:almostrelation}) to get
	\begin{equation}
	D^2_{i, n} \given \F_{n - 1} = D^2_{i, n - 1} + 2D_{i, n - 1}\indicator_{\event_{i, n}} \given \F_{n - 1} + \indicator_{\event_{i, n}} \given \F_{n - 1}.
	\label{Eq:square}
	\end{equation}
	The recurrence for $\E[D^2_{i, n}]$ is obtained by taking expectations on both sides of Equation~(\ref{Eq:square}) twice and plugging in the expectation of $D_{i, n}$,
	\begin{align*}
	\E[D^2_{i, n}] &= \frac{n + 2m - 1}{n + 2m - 3} \, \E[D^2_{i, n - 1}] + \frac{n + 2m - 2}{(m - 1)(n + 2m - 3)}, & \mbox{for }2 \le i \le m,
	\\ \E[D^2_{i, n}] &= \frac{n + 2m - 1}{n + 2m - 3} \, \E[D^2_{i, n - 1}] + \frac{n + 2m - 2}{2(m - 1)(n + 2m - 3)}, & \mbox{for }i = 1, m.
	\end{align*}
	Solving the stochastic recurrences with the initial condition (cf.\ Equation~(\ref{Eq:initial})), we get
	\begin{align*}
	\E[D^2_{i, n}] &= \frac{3n^2 + 2(5m - 4)n + 2(2m - 2)(2m - 1)}{(2m - 1)(m - 1)}, \quad & \mbox{for }2 \le i \le m,
	\\ \E[D^2_{i, n}] &= \frac{2n^2 + (6m - 5)n + (2m - 2)(2m - 1)}{2(2m - 1)(m - 1)}, \quad & \mbox{for }i = 1, m.
	\end{align*}
	Accordingly, we obtain the variances for $D_{i, n}$'s, which form the diagonal of the variance and covariance matrix of $\vD_n$:
	\begin{align*}
	\sigma(n)_{i, i} &= \Var[D_{i, n}] = \frac{(m - 2)n^2 + 2(m - 2)(m - 1)n}{(m - 1)^2(2m - 1)}, \quad & \mbox{for }2 \le i \le m,
	\\ \sigma(n)_{i, i} &= \Var[D_{i, n}] = \frac{(2m - 3)n^2 + (2m - 3)(2m - 2)n}{4(m - 1)^2(2m - 1)}, \quad & \mbox{for }i = 1, m.
	\end{align*}
	To compute the covariances between $D_{i, n}$ and $D_{j, n}$ for $1 \le i \neq j \le m$, we need the mixed moments of $D_{i, n}$ and $D_{j, n}$, i.e., $\E[D_{i, n}D_{j, n}]$. Recall the almost-sure relation between $D_{i, n}$ and $D_{i, n - 1}$ in Equation~(\ref{Eq:almostrelation}). For $i \neq j$, we have
	\begin{align}
	D_{i, n} D_{j, n} \given \F_{n - 1} &= \bigl((D_{i, n - 1} + \indicator_{\event_{i, n}})(D_{j, n - 1} + \indicator_{\event_{j, n}}) \bigr)\given \F_{n - 1} \nonumber
	\\ &= (D_{i, n - 1}D_{j, n - 1} + D_{j, n - 1}\indicator_{\event_{i, n}} + D_{i, n - 1}\indicator_{\event_{j, n}}) \given \F_{n - 1}.
	\label{Eq:mixed}
	\end{align}
	In Equation~(\ref{Eq:mixed}), the term $\indicator_{\event_{i, n}}\indicator_{\event_{j, n}} \given \F_{n - 1}$ vanishes as the events $\event_{i, n}$ and $\event_{j, n}$ are mutually exclusive. In what follows, we obtain a recurrence for $\E[D_{i, n} D_{j, n}]$; that is,
	$$\E[D_{i, n} D_{j, n}] = \frac{n + 2m - 1}{n + 2m - 3} \E[D_{i, n - 1} D_{j, n - 1}].$$
	Solving the equation above recursively, we get
	$$\E[D_{i, n}D_{j, n}] = \frac{2\bigl(n^2 + (4m - 3)n + (2m - 2)(2m - 1)\bigr)}{(2m - 1)(m - 1)},$$
	for $2 \le i \neq j \le m - 1$;
	$$\E[D_{i, n}D_{j, n}] = \frac{n^2 + (4m - 3)n + (2m - 2)(2m - 1)}{(2m - 1)(m - 1)},$$
	for $i = 1, m$ and $2 \le j \le m - 1$ (or vice versa); 
	$$\E[D_{i, n}D_{j, n}] = \frac{n^2 + (4m - 3)n + (2m - 2)(2m - 1)}{2(2m - 1)(m - 1)},$$
	for $ i = 1$ and $j = m$ (or vice versa). Thus, we obtain other entries in the variance and covariance matrix of $\vD_n$. For $2 \le i \neq j \le m - 1$, we have
	$$\sigma(n)_{i, j} = \Cov(D_{i, n}, D_{j, n}) = -\frac{n^2 + 2(m - 1)n}{(m - 1)^2(2m - 1)};$$
	for $i = 1, m$ and $2 \le j \le m - 1$ (or vice versa), we have
	$$\sigma(n)_{i, j} = \Cov(D_{i, n}, D_{j, n}) = -\frac{n^2 + 2(m - 1)n}{2(m - 1)^2(2m - 1)};$$
	and for $ i = 1$ and $j = m$ (or vice versa), we have
	$$\sigma(n)_{i, j} = \Cov(D_{i, n}, D_{j, n}) = -\frac{n^2 + 2(m - 1)n}{4(m - 1)^2(2m - 1)}.$$
\end{proof}

Next, we look at the asymptotic distribution of $\vD_n$ for large $n$. We exploit a {\em Martingale Convergence Theorem} to prove that the limiting distribution of $\vD_n$ (after properly scaled) exists. We first give some quick words about {\em martingale}. Martingale is a popular and powerful mathematical tool owing to its conceptual simplicity and versatility. A general definition of martingale can be found in~\cite[Section 1.1]{Hall}, which will be omitted here. Martingale has found applications in various research areas: theoretical probability theory~\cite{Hall}, applied probability~\cite{Mahmoud}, stochastic processes~\cite{Doob} and financial modeling~\cite{Musiela}. 

The $\sigma$-field sequence $\{\F_n\}_n$ defined in Subsection~\ref{Subsec:PARCT} forms a filtration in our martingale setting. However, for each fixed $1 \le i \le m$, the random variables $D_{i, n}$ do not form a martingale sequence (with respect to $\{\F_n\}_n$). We introduce a transformation to $D_{i, n}$ in the next lemma, and the new sequence is a martingale.

\begin{lemma}
	\label{Lem:martingale}
	For each $1 \le i \le m$, the random variables
	$$M_{i, n} = \frac{2(m - 1)}{n + 2m - 2} \, D_{i, n}$$ 
	form a martingale sequence.
\end{lemma}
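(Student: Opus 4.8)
The plan is to verify the defining martingale property directly from the one-step stochastic recurrence already established in Equation~(\ref{Eq:almostrelation}). Recall that for each fixed $i$ and each $n \ge 1$ we have the almost-sure relation $D_{i, n} \given \F_{n - 1} = (D_{i, n - 1} + \indicator_{\event_{i, n}}) \given \F_{n - 1}$, where the newcomer attaches to vertex $i$ with conditional probability $D_{i, n - 1} / (n + 2m - 3)$. The first step is therefore to compute the conditional expectation $\E[D_{i, n} \given \F_{n - 1}]$, which was already done in the proof of Proposition~\ref{Prop:mom}: it equals $D_{i, n - 1} \cdot (n + 2m - 2)/(n + 2m - 3)$. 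This is the only genuine probabilistic input, and everything else is bookkeeping.

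Next I would simply apply the deterministic scaling factor. Writing $c_n = 2(m - 1)/(n + 2m - 2)$ so that $M_{i, n} = c_n D_{i, n}$, the factor $c_n$ is $\F_{n - 1}$-measurable (indeed constant), so it pulls out of the conditional expectation:
\begin{align*}
\E[M_{i, n} \given \F_{n - 1}] &= c_n \, \E[D_{i, n} \given \F_{n - 1}]
\\ &= \frac{2(m - 1)}{n + 2m - 2} \cdot \frac{n + 2m - 2}{n + 2m - 3} \, D_{i, n - 1}
\\ &= \frac{2(m - 1)}{n + 2m - 3} \, D_{i, n - 1}
\\ &= M_{i, n - 1}.
\end{align*}
The telescoping of the two factors $(n + 2m - 2)$ is the key cancellation, and it is precisely this cancellation that motivates the particular choice of normalizing constant $2(m-1)/(n+2m-2)$ in the statement of the lemma.

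To complete the argument I would check the two remaining conditions in the definition of a martingale with respect to the filtration $\{\F_n\}_n$: that each $M_{i, n}$ is $\F_n$-measurable, which is immediate since $D_{i, n}$ is $\F_n$-measurable and $c_n$ is a constant, and that $M_{i, n}$ is integrable, which follows because $D_{i, n}$ is bounded above by $n + 2m - 2$ (the total degree sum) and hence has finite expectation. With the measurability, integrability, and the one-step identity $\E[M_{i, n} \given \F_{n-1}] = M_{i, n-1}$ all in hand, the sequence $\{M_{i, n}\}_n$ is a martingale.

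I do not anticipate a serious obstacle here; the result is essentially a reformulation of the first-moment recurrence. The only subtlety worth flagging is bookkeeping on the index shift, namely that the denominator appearing in the attachment probability at time $n$ is $2m - 2 + (n - 1) = n + 2m - 3$ rather than $n + 2m - 2$; getting this off-by-one right is what makes the two linear factors cancel cleanly, and a sign or index slip there would produce a spurious residual factor. Beyond that, the choice of constant $2(m - 1)$ in the numerator is cosmetic (it fixes $M_{i, 0} = D_{i, 0}$, matching the initial condition of Equation~(\ref{Eq:initial})) and plays no role in the martingale property itself.
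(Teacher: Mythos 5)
Your proposal is correct and follows essentially the same route as the paper: verify measurability, integrability, and the one-step identity $\E[M_{i,n} \given \F_{n-1}] = M_{i,n-1}$ via the recurrence of Equation~(\ref{Eq:almostrelation}), with the factors $(n+2m-2)$ cancelling exactly as in the paper's computation. The only cosmetic difference is that you justify integrability by the deterministic bound $D_{i,n} \le n + 2m - 2$, whereas the paper cites $\E[D_{i,n}] = O(n)$ from Proposition~\ref{Prop:mom}; both are valid.
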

\begin{proof}
	We check the requirements for martingales one after another. At first, it is obvious that $M_{i, n}$ is measurable on $\F_n$ as $D_{i, n}$ is measurable on $\F_{n}$. In addition, we observe that $\E[D_{n, i}] = O(n)$ for all $1 \le i \le m$ according to Proposition~\ref{Prop:mom}. Finally, by Equation~(\ref{Eq:almostrelation}), we have
	$$\E[M_{i, n} \given \F_{n - 1}] = \E\left[\frac{2(m - 1)}{n + 2m - 2} D_{i, n} \Given \F_{n - 1} \right]
	= \frac{2(m - 1) D_{i, n - 1}}{n + 2m - 3} = M_{i, n - 1}.$$
	which completes the verification.
\end{proof}

\begin{theorem}
	\label{Thm:convergence}
	As $n \to \infty$, there exists a random vector $\vX$ such that
	$$\frac{\vD_n}{n} \convD \vX.$$
\end{theorem}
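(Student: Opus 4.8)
The plan is to exploit the martingale structure furnished by Lemma~\ref{Lem:martingale} and then invert the scaling. Fix a coordinate $i$. Since every vertex on the spine always has degree at least $1$, we have $D_{i, n} \ge 1 > 0$, so $M_{i, n} = \frac{2(m - 1)}{n + 2m - 2} D_{i, n}$ is a \emph{non-negative} martingale. A non-negative martingale has constant (hence bounded) expectation, $\sup_n \E[|M_{i, n}|] = \E[M_{i, 0}] < \infty$, so I would first invoke a Martingale Convergence Theorem (see~\cite{Hall}) to conclude that $M_{i, n}$ converges almost surely to some integrable limit $M_{i, \infty}$. A cleaner alternative, which recycles the moment computations already in hand, is to verify $L^2$-boundedness directly: Proposition~\ref{Prop:mom} gives $\E[D_{i, n}^2] = O(n^2)$, whence
$$\E[M_{i, n}^2] = \frac{4(m - 1)^2}{(n + 2m - 2)^2} \, \E[D_{i, n}^2] = O(1),$$
so $M_{i, n}$ is bounded in $L^2$ uniformly in $n$ and therefore converges both almost surely and in $L^2$.

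The second step is to transfer this to $\vD_n/n$. Writing
$$\frac{D_{i, n}}{n} = \frac{n + 2m - 2}{2(m - 1) \, n} \, M_{i, n},$$
and noting that the deterministic prefactor tends to $1/(2(m - 1))$, the algebra of almost-sure limits yields $D_{i, n}/n \to X_i := \frac{1}{2(m - 1)} M_{i, \infty}$ almost surely. Because this holds simultaneously for each of the finitely many coordinates $i = 1, \ldots, m$ on the (almost-sure) intersection of the corresponding events, the full vector $\vD_n/n$ converges almost surely to $\vX = (X_1, \ldots, X_m)^{\top}$, and almost-sure convergence implies convergence in distribution, which is precisely the assertion.

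The argument is essentially mechanical once the martingale is available; the only point requiring genuine care is justifying the hypothesis of the convergence theorem, i.e.\ the $L^1$-boundedness (from non-negativity) or the $L^2$-boundedness (from Proposition~\ref{Prop:mom}) that upgrades the martingale property into an actual limit. I expect this to be the main, though modest, obstacle, and the $L^2$ estimate above dispatches it cleanly while also delivering the stronger almost-sure mode of convergence for free. I would finally emphasize that this theorem only asserts the \emph{existence} of the limiting vector $\vX$; identifying its law as a Dirichlet distribution is a separate matter, to be handled through the \Polya\ urn embedding developed in the remainder of the section.
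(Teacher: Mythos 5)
Your proposal is correct and follows essentially the same route as the paper: it invokes the martingale $M_{i,n}$ from Lemma~\ref{Lem:martingale}, applies the Martingale Convergence Theorem to get an almost-sure limit $X_i$ coordinatewise, and rescales by the deterministic factor to obtain $\vD_n/n \to \vX$. The only difference is that you justify the integrability hypothesis explicitly (via non-negativity for $L^1$-boundedness, or via the second moments of Proposition~\ref{Prop:mom} for $L^2$-boundedness), whereas the paper simply asserts $\mathcal{L}^1$-boundedness as obvious; your version is, if anything, slightly more complete.
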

\begin{proof}
	For each fixed $i$, recall the martingale sequence $\{M_{i, n}\}_n$ established in Lemma~\ref{Lem:martingale}. By the construct of $M_{i, n}$ and Proposition~\ref{Prop:mom}, it is obvious that $M_{i, n}$ is $\mathcal{L}^1$-bounded. According to the Martingale Convergence Theorem~\cite[Theorem 2.5]{Hall}, we conclude that there exits a random variable~$X_i$, to which $M_{i, n}$ converges almost surely, as $n \to \infty$. For each $i$, we set $\tilde{X}_i = X_i/(2(m - 1))$. The random vector $\vX = (\tilde{X}_1, \tilde{X}_2, \ldots, \tilde{X_n})^{\top}$ is the limit as stated in the theorem.
\end{proof}

We prove the existence of the limiting distribution of $\vD_n$ in Theorem~\ref{Thm:convergence}. However, the limiting distribution is not determined. Next, we introduce a probabilistic model---{\em \Polya\ urn model}---to characterize the dynamics of the degree variables, and thus find the exact distribution of $\vD_n$, followed by the limiting distribution. We refer the interested readers to~\cite{Mahmoud} for the history, definition, and applications of \Polya\ urn models. In this paper, we focus on a \Polya\ urn generalized from a classical model---the {\em \Polya-Eggenberger urn}~\cite{Eggenberger}.

Consider an urn containing $k$ different types of balls (e.g., $k$ different colors). Initially, the urn contains a total number of $\tau_{0}$ balls, of which there are $\tau_{i, 0}$ balls of color $i$, for $i = 1, 2, \ldots, m$, and $\tau_0 = \sum_{i = 1}^{m} \tau_{i, 0}$. At each time point $n \ge 1$, a ball is chosen from the urn uniformly at random, its color is observed, and the ball is placed back to the urn in addition with a ball of the same color. The dynamics of the urn scheme is governed by an $m \times m$ {\em replacement matrix}:
$$\begin{pmatrix}
1 & 0 & \cdots & 0
\\ 0 & 1 & \cdots & 0
\\ \vdots & \vdots & \ddots & \vdots
\\ 0 & 0 & \cdots & 1
\end{pmatrix},$$
where the rows are indexed with colors $1, 2, \ldots, m$ from top to bottom, and the columns are indexed with colors $1, 2, \ldots, m$ from left to right. The dynamic of the degree addition in a preferential attachment RCT is associated with an $m$-color \Polya-Eggenberger urn with the initial condition： $\tau_{1, 0} = 1, \tau_{2, 0} = 2, \ldots, \tau_{m - 1, 0} = 2, \tau_{m, 0} = 1$.

A remarkable property of the \Polya-Eggenberger urns is {\em exchangeability}, i.e., the probabilities of choosing balls of different colors in all $n$-long sequences which have the same number of balls sampled for each color are identical, not depending on the order of those balls chosen in the sequence. The exact joint distribution of degree variables $D_{1, n}, D_{2, n}, \ldots, D_{m, n}$ is given in the next theorem.
\begin{theorem}
	\label{Thm:exactdist}
	Let $\mathcal{C}^{(P)}(L_{1, n}, L_{2, n}, \ldots, L_{m, n})$ be a preferential attachment RCT at time $n$, and let $\vD_n = (D_{1, n}, D_{2, n}, \ldots, D_{m, n})^{\top}$ be the random vector that represents the degree profile of the $m$ vertices on the central path. Suppose that the balls of color $i$ are chosen $s_i$ times in the $n$-long sampling sequence, we have
	$$ \Prob(D_{1, n} = \tau_{1, 0} + s_1, \ldots, D_{m, n} = \tau_{m, 0} + s_{m}) = \binom{n}{s_1, s_2, \cdots, s_m} \frac{\prod_{i = 1}^{m} \langle \tau_{i, 0} \rangle_{s_i}}{\langle \tau_0 \rangle_{n}},$$
	where $(\tau_{1, 0}, \tau_{2, 0}, \ldots, \tau_{m - 1, 0}, \tau_{m, 0})^{\top} = (1, 2, \ldots, 2, 1)^{\top}$, $0 \le s_1, s_2, \ldots, s_m \le n$, $\sum_{i = 1}^{m} s_i = n$, $\tau_0 = \sum_{i = 1}^{m} \tau_{i, 0} = 2m - 2$, and $\langle \cdot \rangle$ refers to the Pochhammer symbol of the rising factorial.
\end{theorem}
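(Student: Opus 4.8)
The plan is to exploit the equivalence between the degree-addition dynamics of the preferential attachment RCT and the $m$-color \Polya-Eggenberger urn described just above, and then to read the distribution off from the combinatorics of the urn. Concretely, I identify color $i$ with the vertex labeled $i$ on the spine. By the attachment rule, the conditional probability that color $i$ is drawn at time $n$ given $\F_{n - 1}$ equals $D_{i, n - 1}/(\tau_0 + n - 1)$ with $\tau_0 = 2m - 2$, which is exactly the urn sampling probability when the urn holds $D_{i, n - 1}$ balls of color $i$. Since the replacement matrix is the identity, drawing color $i$ increments only the count of color $i$, so $D_{i, n} = \tau_{i, 0} + s_i$, where $s_i$ is the number of times color $i$ is sampled in the first $n$ draws, starting from the composition $(\tau_{1, 0}, \ldots, \tau_{m, 0}) = (1, 2, \ldots, 2, 1)$.

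Next I would compute the probability of a single fixed draw sequence that samples color $i$ exactly $s_i$ times. Walking through the draws in order, at the $t$-th draw the urn contains $\tau_0 + t - 1$ balls in total, so the denominators multiply to $\prod_{t = 1}^{n} (\tau_0 + t - 1) = \langle \tau_0 \rangle_{n}$, independent of which colors are chosen. For the numerators, observe that the $k$-th time color $i$ is sampled the urn holds $\tau_{i, 0} + (k - 1)$ balls of that color, no matter how the draws of the other colors are interleaved; hence the numerator contributed by color $i$ is $\tau_{i, 0}(\tau_{i, 0} + 1) \cdots (\tau_{i, 0} + s_i - 1) = \langle \tau_{i, 0} \rangle_{s_i}$, and the full numerator factorizes as $\prod_{i = 1}^{m} \langle \tau_{i, 0} \rangle_{s_i}$. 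Therefore every ordered sequence realizing the given color counts has the same probability $\prod_{i = 1}^{m} \langle \tau_{i, 0} \rangle_{s_i} / \langle \tau_0 \rangle_{n}$, which is precisely the exchangeability property invoked before the statement.

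Finally I would count the sequences: the ordered length-$n$ sequences realizing the counts $(s_1, \ldots, s_m)$ number $\binom{n}{s_1, s_2, \cdots, s_m}$, so summing the common per-sequence probability over all of them yields the claimed product formula, after which I substitute $\tau_{i, 0} \in \{1, 2\}$ and $\tau_0 = 2m - 2$. The one step requiring care is the exchangeability bookkeeping, namely verifying rigorously that the numerator factorizes over colors and is invariant under reordering of the draws. An equivalent and fully rigorous alternative is induction on $n$: the inductive step sums the one-step transition probabilities $(\tau_{i, 0} + s_i - 1)/(\tau_0 + n - 1)$ over the $m$ predecessor states $(s_1, \ldots, s_i - 1, \ldots, s_m)$ and checks that the resulting recurrence reproduces the stated expression, which sidesteps any appeal to exchangeability.
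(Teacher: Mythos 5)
Your proposal is correct and follows essentially the same route as the paper: compute the probability of a fixed draw sequence with color counts $(s_1, \ldots, s_m)$, observe that it equals $\prod_{i=1}^{m} \langle \tau_{i,0} \rangle_{s_i} / \langle \tau_0 \rangle_{n}$ regardless of ordering, and multiply by the multinomial count of such sequences. The only (welcome) difference is that you verify the order-invariance of the numerator directly for an arbitrary interleaving, whereas the paper computes one particular sorted string and then invokes exchangeability as a known property of the \Polya--Eggenberger urn.
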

\begin{proof}
	Consider an $m$-color \Polya-Eggenberger urn starting with $\tau_{i, 0}$ balls of color $i$, for $i = 1, 2, \ldots, m$. A possible string (sequence) to obtain an urn containing $\tau_{i, 0} + s_i$ balls of color $i$ is to sample balls of color $1$ in the first $s_1$ steps of the $n$-long sequence, sample balls of color $2$ in the next $s_2$ steps, and continue sampling in this manner until the balls of color $m$ are selected in the last $s_m$ steps in the sequence. The probability of obtaining this particular sampling string is
	\begin{align*}
	&\frac{\tau_{1, 0}(\tau_{1, 0} + 1)\cdots(\tau_{1, 0} + s_1 - 1)}{\tau_0(\tau_0 + 1)\cdots(\tau_{0} + s_1 - 1)} \times \frac{\tau_{2, 0}(\tau_{2, 0} + 1)\cdots(\tau_{2, 0} + s_2 - 1)}{(\tau_0 + s_1)(\tau_0 + s_1 + 1)\cdots(\tau_{0} + s_1 + s_2 - 1)}
	\\&\qquad{}\times \cdots \times \frac{\tau_{m, 0}(\tau_{m, 0} + 1)\cdots(\tau_{m, 0} + s_m - 1)}{(\tau_0 + s_1 + \cdots + s_{m - 1})\cdots(\tau_{0} + s_1 + \cdots + s_{m} - 1)}.
	\end{align*}
	There is a total of $\binom{n}{s_1, s_2, \cdots, s_m}$ strings to achieve the outcome of the urn containing $\tau_{i, 0} + s_i$ balls of color $i$ at time $n$. By the property of exchangeability, we obtain the stated joint probability mass function of $D_{i, n}$'s.
\end{proof}

\begin{corollary}
	\label{Cor:asymdist}
	As $n \to \infty$, we have
	$$\left(\frac{D_{1, n}}{n}, \frac{D_{2, n}}{n}, \ldots, \frac{D_{m, n}}{n}\right) \convD {\rm Dir}(\tau_{1, 0}, \tau_{2, 0}, \ldots, \tau_{m, 0}),$$
	where $\tau_{1, 0} = 1, \tau_{2, 0} = 2, \ldots, \tau_{m, 0} = 1$ are the parameters of ($m$-dimensional) Dirichlet distribution.
\end{corollary}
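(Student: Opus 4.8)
The plan is to identify the limiting distribution whose existence is already guaranteed by Theorem~\ref{Thm:convergence}, using the \emph{method of moments} together with the exact joint law recorded in Theorem~\ref{Thm:exactdist}. First I would observe that, since each initial count $\tau_{i,0}$ is a fixed constant, $D_{i,n}/n = (\tau_{i,0} + s_i)/n$ differs from $s_i/n$ by $O(1/n)$, so it suffices to pin down the limiting joint law of the normalized sampling counts $(s_1/n, \ldots, s_m/n)$. Because $\sum_{i=1}^m D_{i,n} = n + 2m - 2$, every coordinate $D_{i,n}/n$ lies in $[0,\, 1 + O(1/n)]$; the normalized vector is thus uniformly bounded, so all of its joint moments exist and uniform integrability is automatic. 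Combined with the weak convergence $\vD_n/n \convD \vX$ of Theorem~\ref{Thm:convergence}, this means the joint moments of the limit $\vX$ are exactly the limits of the joint moments of $\vD_n/n$, reducing the corollary to a moment identification.

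Second, I would compute the mixed falling-factorial moments of $(s_1, \ldots, s_m)$ directly from the \Polya-Eggenberger (Dirichlet--multinomial) mass function of Theorem~\ref{Thm:exactdist}. Writing $(x)_{(k)} = x(x-1)\cdots(x-k+1)$ for the falling factorial and $r = \sum_{i=1}^m r_i$, a short computation collapsing the multinomial coefficient against the rising factorials $\langle \cdot \rangle$ yields
\begin{equation*}
\E\left[\prod_{i=1}^m (s_i)_{(r_i)}\right] = (n)_{(r)} \, \frac{\prod_{i=1}^m \langle \tau_{i,0}\rangle_{r_i}}{\langle \tau_0 \rangle_{r}}.
\end{equation*}
Dividing by $n^r$ and letting $n \to \infty$, the factor $(n)_{(r)}/n^r \to 1$, and since each $s_i/n$ is bounded the falling factorials may be replaced by ordinary powers at a cost of $O(1/n)$; hence
\begin{equation*}
\E\left[\prod_{i=1}^m \left(\frac{D_{i,n}}{n}\right)^{r_i}\right] \longrightarrow \frac{\prod_{i=1}^m \langle \tau_{i,0}\rangle_{r_i}}{\langle \tau_0 \rangle_{r}},
\end{equation*}
where the shift from $s_i/n$ to $D_{i,n}/n$ contributes only vanishing $\tau_{i,0}/n$ corrections.

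Third, I would recognize the right-hand side as precisely the joint moment $\E[\prod_{i} P_i^{r_i}]$ of a random vector $(P_1, \ldots, P_m) \sim {\rm Dir}(\tau_{1,0}, \ldots, \tau_{m,0})$. Thus every joint moment of $\vX$ equals the corresponding Dirichlet moment. Since the Dirichlet distribution is supported on the compact unit simplex, it is uniquely determined by its moments, so this moment match forces $\vX \sim {\rm Dir}(\tau_{1,0}, \ldots, \tau_{m,0})$ with $(\tau_{1,0}, \ldots, \tau_{m,0}) = (1, 2, \ldots, 2, 1)$, completing the identification.

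The hard part will not be any single calculation but the rigorous bookkeeping at the two passages to the limit—replacing falling factorials by powers and discarding the $\tau_{i,0}/n$ shift both lean on the uniform boundedness of $\vD_n/n$ to control error terms—together with a careful statement that joint-moment convergence identifies the weak limit when that limit is moment-determinate on a compact set (equivalently, that bounded continuous test functions are uniformly approximable by polynomials on the simplex). An alternative route that sidesteps the determinacy issue would exploit the exchangeability emphasized just before Theorem~\ref{Thm:exactdist}: by de Finetti's theorem the color sequence is a mixture of i.i.d.\ multinomial trials directed by the almost-sure limit of the empirical frequencies, and the same factorial-moment computation identifies the directing measure as the Dirichlet, whence the strong law gives $s_i/n \to P_i$ almost surely with $(P_1, \ldots, P_m) \sim {\rm Dir}(\tau_{1,0}, \ldots, \tau_{m,0})$.
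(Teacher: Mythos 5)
Your proposal is correct, but it takes a genuinely different route from the paper. The paper works directly with the exact joint probability mass function from Theorem~\ref{Thm:exactdist}: it rewrites the Dirichlet--multinomial probabilities in terms of gamma functions, substitutes $s_i = n\theta_i$, and applies Stirling's approximation to the ratios of gamma functions to show that the (scaled) mass function converges to the Dirichlet density --- a local-limit-theorem style identification. You instead identify the limit by the method of moments: your collapsing of the multinomial coefficient against the rising factorials to get $\E\bigl[\prod_i (s_i)_{(r_i)}\bigr] = (n)_{(r)}\prod_i \langle \tau_{i,0}\rangle_{r_i}/\langle \tau_0\rangle_{r}$ is the standard factorial-moment identity for the \Polya--Eggenberger urn and is correct, the limiting values are indeed the joint moments of ${\rm Dir}(\tau_{1,0},\ldots,\tau_{m,0})$, and moment-determinacy on the compact simplex (via Stone--Weierstrass) legitimately closes the argument. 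Each approach buys something: the paper's computation is shorter and more explicit, but passing from pointwise convergence of a lattice mass function to weak convergence, and applying Stirling uniformly near the boundary of the simplex where some $s_i$ stay bounded, both require care that the paper leaves implicit; your route trades that for the (easy, because of compact support) determinacy step and avoids boundary issues entirely since all quantities are uniformly bounded. Your de Finetti alternative is also sound and is in fact the classical proof of the \Polya\ urn limit theorem; it yields the stronger conclusion of almost-sure convergence of the proportions, which dovetails with the martingale convergence already established in Theorem~\ref{Thm:convergence}.
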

\begin{proof}
	We write the joint probability mass function of $D_{1, n}, D_{2, n}, \ldots, D_{m, n}$ presented in Theorem~\ref{Thm:exactdist} in terms of gamma functions; that is,
	\begin{align}
	\Prob(D_{1, n} = \tau_{1, 0} + s_1, \ldots, D_{m, n} = \tau_{m, 0} + s_{m}) &= \frac{\Gamma(n + 1)}{\Gamma(s_1 + 1) \Gamma(s_2 + 1) \cdots \Gamma(s_m + 1)} \nonumber
	\\ &\qquad{}\times \frac{\prod_{i = 1}^{m} \bigl(\Gamma(\tau_{i, 0} + s_i) / \Gamma(\tau_{i, 0}) \bigr)}{\Gamma(\tau_0 + n) / \Gamma(\tau_0)} \nonumber
	\\ &= \frac{\Gamma(\tau_0)}{\prod_{i = 1}^{m} \Gamma(\tau_{i, 0})} \frac{\Gamma(n + 1)}{\Gamma(\tau_0 + n)} \nonumber
	\\ &\qquad{}\times \frac{\prod_{i = 1}^{m}\Gamma(\tau_{i, 0} + s_i)}{\prod_{i = 1}^{m} \Gamma(s_i + 1)}. \label{Eq:exactdist}
	\end{align}
	Noting that $0 \le s_1, s_2, \ldots, s_m \le n$ and $s_1 + s_2 + \ldots + s_m = n$, we define $\theta_i = s_i / n$, and find that the support of $\theta_i$'s is $0 \le \theta_1, \theta_2, \ldots, \theta_m \le 1$ such that $\sum_{i = 1}^{m} \theta_i = 1$. Replace $s_i$ in Equation~(\ref{Eq:exactdist}) by $n\theta_{i}$ for each $1 \le i \le m$. We then apply the {\em Stirling's approximation}~\cite[Equation (4.23)]{Graham} to the ratio of gamma functions in Equation~(\ref{Eq:exactdist}) as $n \to \infty$, and conclude that 
	$$\left(\frac{D_{1, n}}{n}, \frac{D_{2, n}}{n}, \ldots, \frac{D_{m, n}}{n}\right) \convD \frac{\Gamma(\tau_0)}{\prod_{i = 1}^{m} \Gamma(\tau_{i, 0})} \prod_{i = 1}^{m} (\theta_i)^{\tau_{i, 0} - 1},$$
	for $\sum_{i = 1}^{m} \theta_i = 1$, which is the probability density function of a Dirichlet distribution with parameters $(\tau_{1, 0}, \tau_{2, 0}, \ldots, \tau_{m, 0})$.
\end{proof}

\section{Gini index of random caterpillar trees}
\label{Sec:gini}

In this section, we propose a Gini-type index to characterize the evolution of the two classes of RCTs considered in Section~\ref{Sec:RCT}. The {\em Gini index}, named after the Italian statistician and sociologist Corrado Gini, arose from a problem of measuring statistical dispersion of wealth distribution of national residents in economics. In modern times, the Gini index is extended to a commonly-used measure of inequality of a distribution, which has found applications in medicine~\cite{Lee}, public health~\cite{Kennedy}, physics~\cite{Roe}, chemistry~\cite{Graczyk} and complex networks~\cite{Hu}, etc. Statisticians are committed to establishing and developing rigorous methods to calculate or estimate the Gini index, see representative papers such as~\cite{Gastwirth, Lerman, Ogwang}. Very recently, the Gini index was exploited to measure the sparsity of a network~\cite{Goswami}. One of the most effective ways to illustrate the Gini index may be to exploit a graphical representation---the {\em Lorenz curve}; see~\cite{Gastwirth}. In this paper, we propose a Gini-type index which quantifies the disparity within different classes of RCTs so as to characterize their evolution. We also compare the proposed Gini index with the one recently introduced in~\cite{Balaji} via some numerical experiments. To distinguish the two Gini indicies in the rest of the manuscript, we call the Gini index from~\cite{Balaji} type {\rm I} Gini index, and the one proposed herein type {\rm II} Gini index.

\subsection{Type {\rm I} Gini index}
\label{Subsec:type1}
The first type of Gini index (i.e., type {\rm I} Gini index) that we look into is proposed in~\cite{Balaji}, the authors of which considered a Gini-type topological index for several classes of random rooted trees. In particular, they illustrate the estimation of their measure via a class of uniform RCTs. 

To begin with, we give a quick word about type {\rm I} Gini index. Let $\mathcal{T}$ be a class of rooted trees. For each vertex $v$ in an arbitrary tree $T \in \mathcal{T}$, the {\em geodesic distance} between $v$ and the root (this measure is sometimes expressed as the {\em depth} of $v$) is the number of edges in a shortest path connecting them, denoted by $d_v$. If we consider all the vertices in $T$ as our target population, and the ``wealth'' of each of them is represented by $d_v$, then the associated Gini index, i.e., type {\rm I} Gini index of $T$, is given by
\begin{equation}
\label{Eq:gini1ext}
G_{\rm I}(T) = \frac{\sum_u \sum_v |d_u - d_v|}{2 \sum_u \sum_v d_v},
\end{equation}
for all $u, v \in T$. The estimator of type {\rm I} Gini index for an arbitrary class of rooted trees $\mathcal{T}$, denoted by $\hat{G}_{\rm I}(\mathcal{T})$, is developed on the procedures introduced by~\cite{Gastwirth}. Let $\{\# \mathcal{T}\}$ be the cardinality of $\mathcal{T}$. The estimator of $G_{\rm I}(\mathcal{T})$ is given by
\begin{equation}
\hat{G}_{\rm I}(\mathcal{T}) = \frac{\sum_u \sum_v \E|d_u - d_v|}{2 \left(\E[{\# \mathcal{T}}]\right)^2 \E[d_{v}]},
\label{Eq:gini1gen}
\end{equation}
for all $u, v \in T$ and an arbitrary $T \in \mathcal{T}$.

According to the estimator in Equation~(\ref{Eq:gini1gen}), we calculate type {\rm I} Gini indices of the classes of uniform RCTs and preferential attachment RCTs, respectively. Without loss of generality, we consider the vertex at the leftmost position on the central path as the root. For better readability, we only present the results in the main body of the paper, but more algebra can be found in the appendix (Section~\ref{Subsec:gini1u}). The type {\rm I} Gini index of uniform RCTs at time $n$ is given by
$$\hat{G}_{\rm I}\left(\mathcal{C}^{(U)}\right) = \frac{(m - 1)\left[(m + 1)n^2 + (m^2 + 3m - 1)n + m^3 + m^2\right]}{3m (n + m) \left[(m + 1)n + m^2 - 2m\right]}.$$
As $n$ goes to infinity, we see that $\hat{G}_{\rm I}\left(\mathcal{C}^{(U)}\right)$ converges to $(m - 1)/(3m)$. For a large value of $m$, this index is close to $1/3$, which is consistent with the conclusion drawn in~\cite{Balaji}.

We verify our conclusion via a Monte-Carlo experiment, the graphical result of which is depicted in Figure~\ref{Fig:ginioneunifRCT}. In the experiment, we simulate $40$ classes of uniform RCTs at time $n = 500$ according to $40$ different values of $m$: $5, 10, \ldots, 200$. For each class of uniform RCTs, the replication number $R$ is set at $500$. Note that the size of an arbitrary RCT from any class is deterministic; that is, $m +　n$. For each simulated uniform RCT, we determine the depth of each vertex in our simulations, and compute type {\rm I} Gini index via the formula in Equation~(\ref{Eq:gini1ext}). The estimate of this type of Gini index (for each class) is obtained by averaging over all type {\rm I} Gini indices of the $500$ replications. 

\begin{figure}[ht]
	\begin{center}
		\begin{minipage}[b]{0.45\textwidth}
			\includegraphics[width=\textwidth]{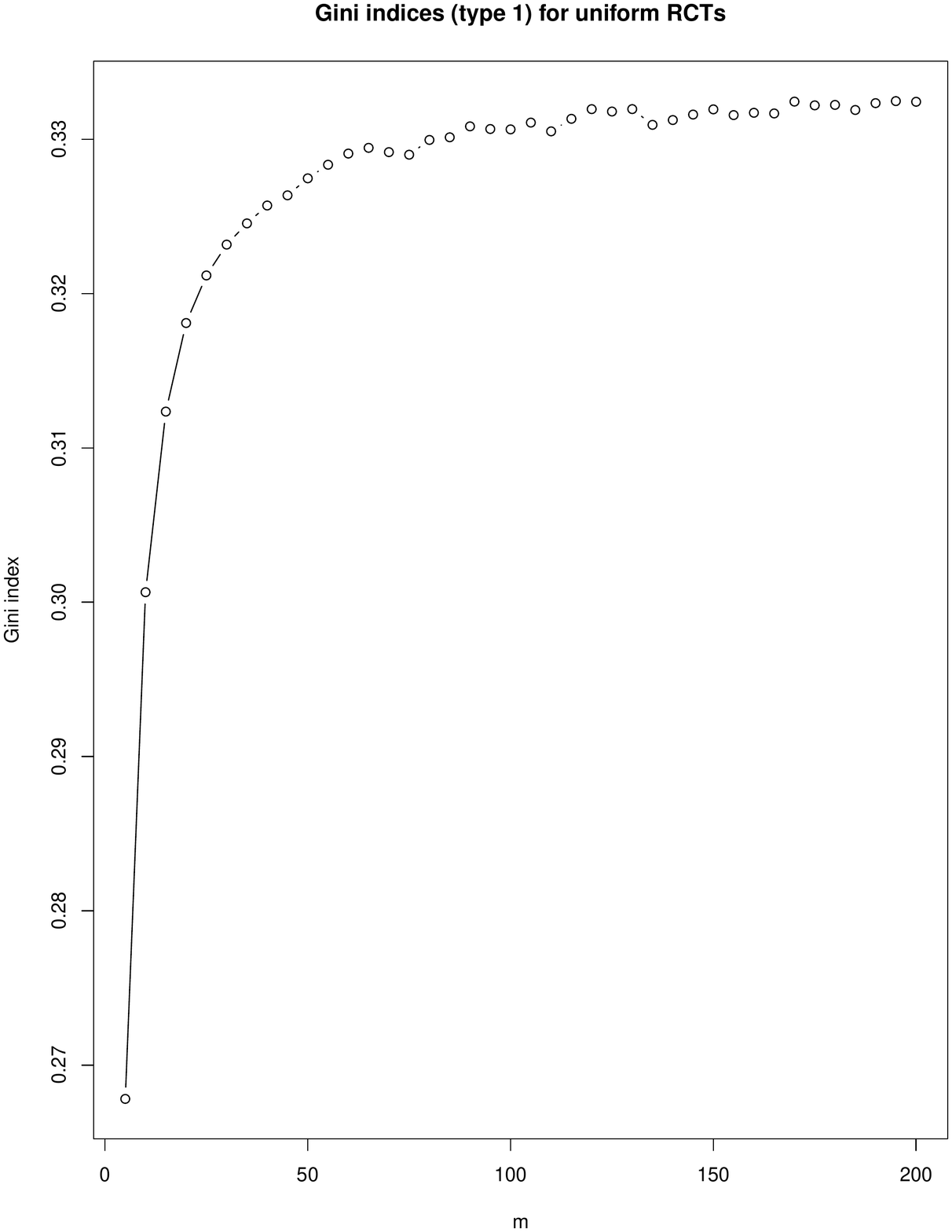}
			\caption{Simulated type {\rm I} Gini index for uniform RCTs at time $n = 500$; $R = 500$ and $m = 5, 10, \ldots, 200$.}
			\label{Fig:ginioneunifRCT}
		\end{minipage}
		\hfill
		\begin{minipage}[b]{0.45\textwidth}
			\includegraphics[width=\textwidth]{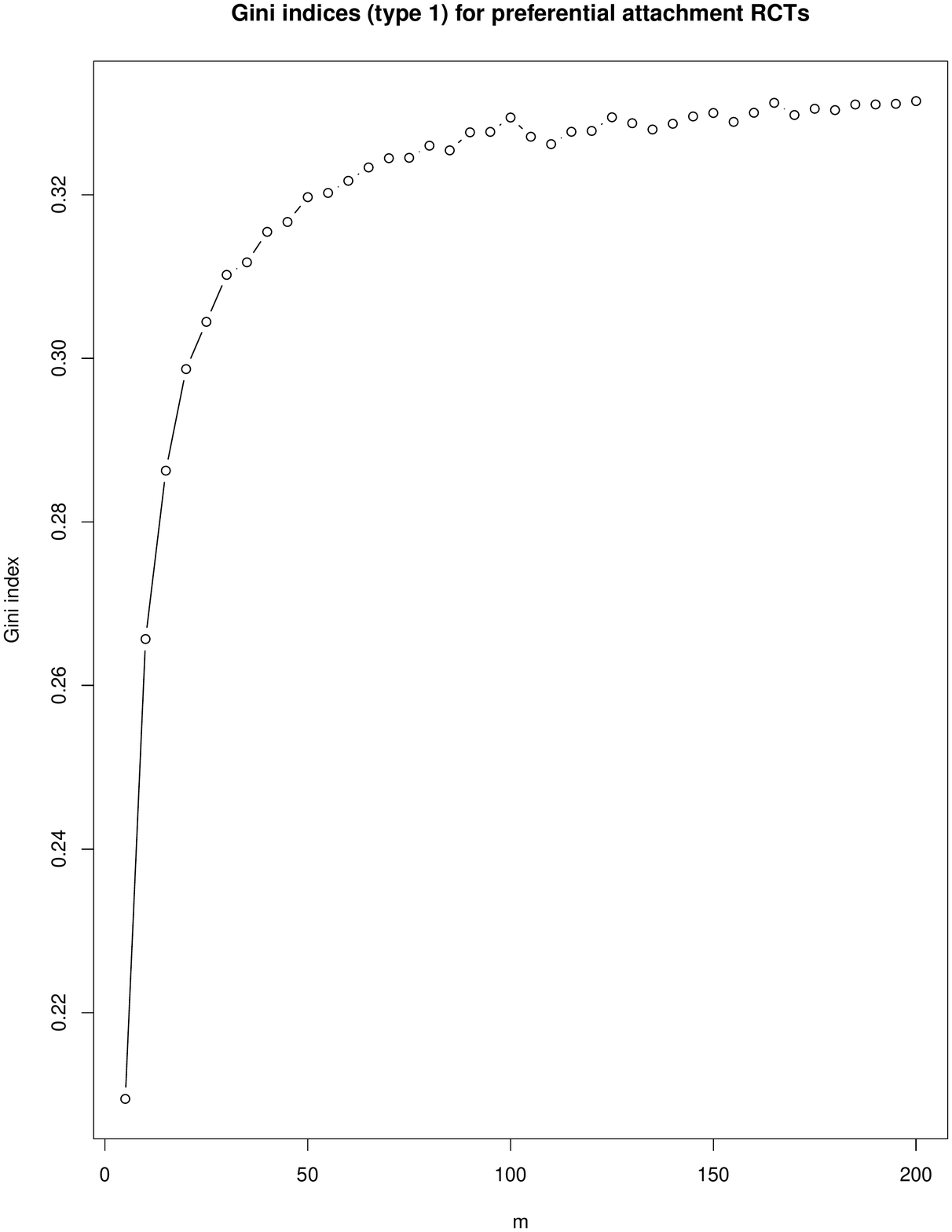}
			\caption{Simulated type {\rm I} Gini index for preferential attachment RCTs at time $n = 500$; $R = 500$ and $m = 5, 10, \ldots, 200$.}
			\label{Fig:ginionepreferRCT}
		\end{minipage}
	\end{center}
\end{figure}

We next conduct an analogous analysis of $\hat{G}_{\rm I}\left(\mathcal{C}^{(P)}\right)$. The analytic result of estimation is presented in Section~\ref{Subsec:gini1p}. We find that $\hat{G}_{\rm I}\left(\mathcal{C}^{(P)}\right)$ approaches $(2m^2 - 7m + 9)/(6m^2 + 3m - 1)$ when $n$ is large. In what follows, $\hat{G}_{\rm I}\left(\mathcal{C}^{(P)}\right)$ also converges to $1/3$ for a large value of $m$. This conclusion is also verified via a numerical experiment with the same parametric setting (as for the uniform case); see Figure~\ref{Fig:ginionepreferRCT}.

According to our computations, we discover that type ${\rm I}$ Gini indices proposed in~\cite{Balaji} are asymptotically identical for two classes of RCTs which grow in completely different manners, suggesting this type of Gini index fails to distinguish the evolutionary behavior and construct feature of the two models. Our conjecture is further verified by four studies of Lorenz curves, depicted in Figure~\ref{Fig:lorenzdist}. We simulate $5000$ uniform RCTs and preferential RCTs at time $n = 5000$ for each of the four values of $m$, which are $5$ (top left), $50$ (top right), $100$ (bottom left) and $500$ (bottom right), respectively. We can see that the Lorenz curves of uniform RCTs and preferential attachment RCTs are close to each other for small values of $m$, but they are indistinguishable for large values of $m$. We thus conclude that type {\rm I} Gini index cannot be used to quantify the inequality of the distribution of the distance measure of the vertices in different classes of RCTs. 
\begin{figure}[H]
	\begin{center}
		\includegraphics[width=\textwidth]{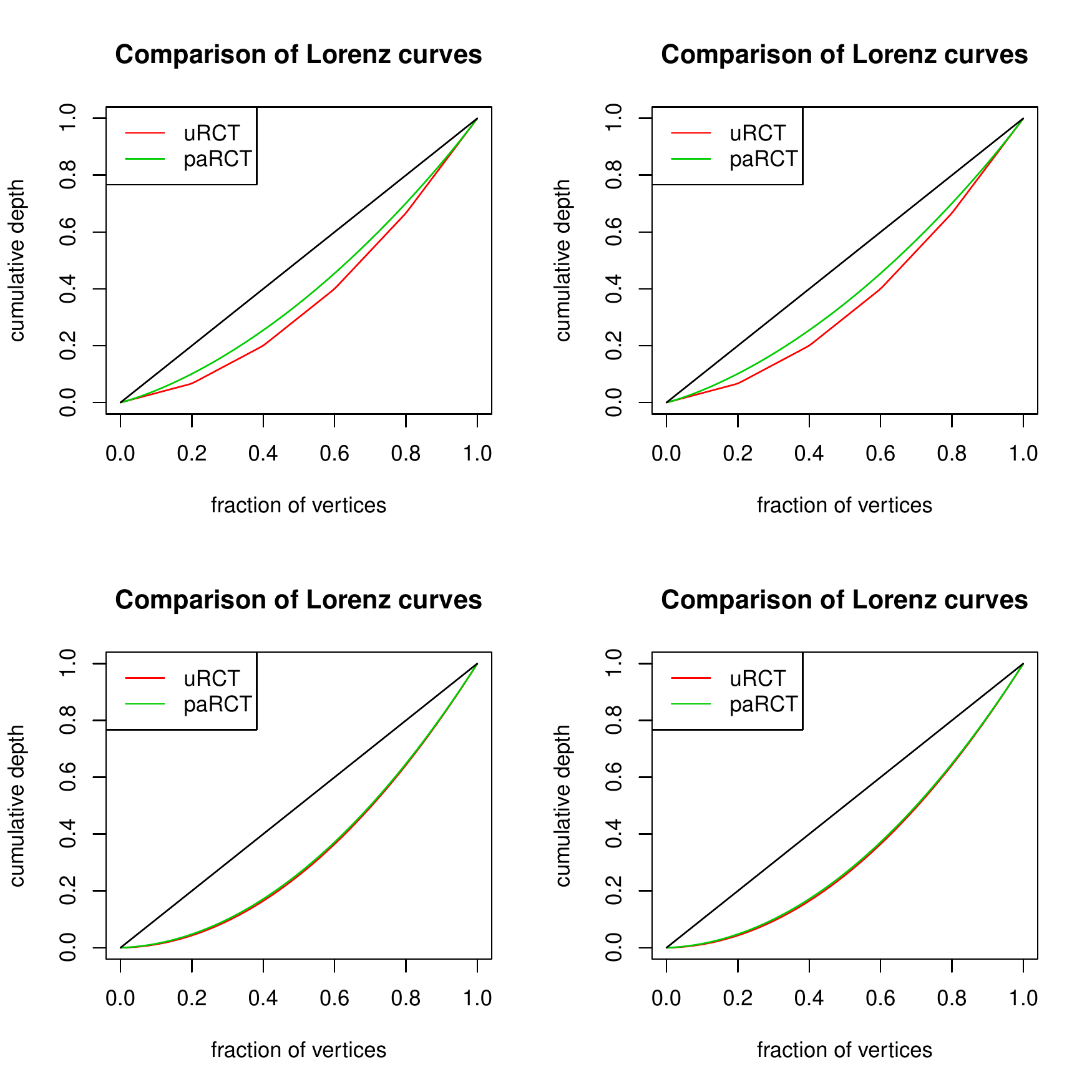}
		\caption{The Lorenz curves (for type {\rm I} Gini index) of uniform RCTs and preferential attachment RCTs for $m = 5, 50, 100, 500$ at time $n = 5000$; $R = 5000$.}
		\label{Fig:lorenzdist}
	\end{center}
\end{figure}

\subsection{Type {\rm II} Gini index}
\label{Subsec:type2}

Alternatively, we propose a new type of Gini index, called type {\rm II} Gini index, which not only accounts for the structure of RCTs, but also precisely characterize the evolution of the RCTs from different classes. 

Instead of including all vertices in our target population, we only consider the $m$ vertices on the central path of a tree at time $n$, i.e., $T_n$. The ``wealth'' of vertex $i$ (for $i = 1, 2, \ldots, m$) is represented by the number of leaves attached to it, i.e., $l_{i, n}$. Thus, we define type {\rm II} Gini index of $T_n$ (at time $n$) which is given by
\begin{equation}
\label{Eq:gini2ext}
G_{\rm II}(T_n) = \frac{\sum_{i = 1}^{m} \sum_{j = 1}^{m} |l_{i, n} - l_{j, n}|}{2 \sum_{j = 1}^{m} \sum_{i = 1}^{m} l_{i, n}}.
\end{equation}

We conduct analogous numerical experiments to calculate type {\rm II} Gini indices for uniform and preferential attachment RCTs. For each class, we simulate $R = 500$ RCTs at time $n = 500$ for $20$ different values of $m$, and compute $G_{\rm II}(T_n)$ for each simulated RCT according to Equation~(\ref{Eq:gini2ext}). Within each class, we take the average of $500$ copies of $G_{\rm II}(T_n)$ as the estimate of type {\rm II} Gini index, and the results of uniform and preferential attachment RCTs are respectively depicted in Figures~\ref{Fig:ginitwounifRCT} and~\ref{Fig:ginitwopreferRCT}.
\begin{figure}[H]
	\begin{center}
		\begin{minipage}[b]{0.45\textwidth}
			\includegraphics[width=\textwidth]{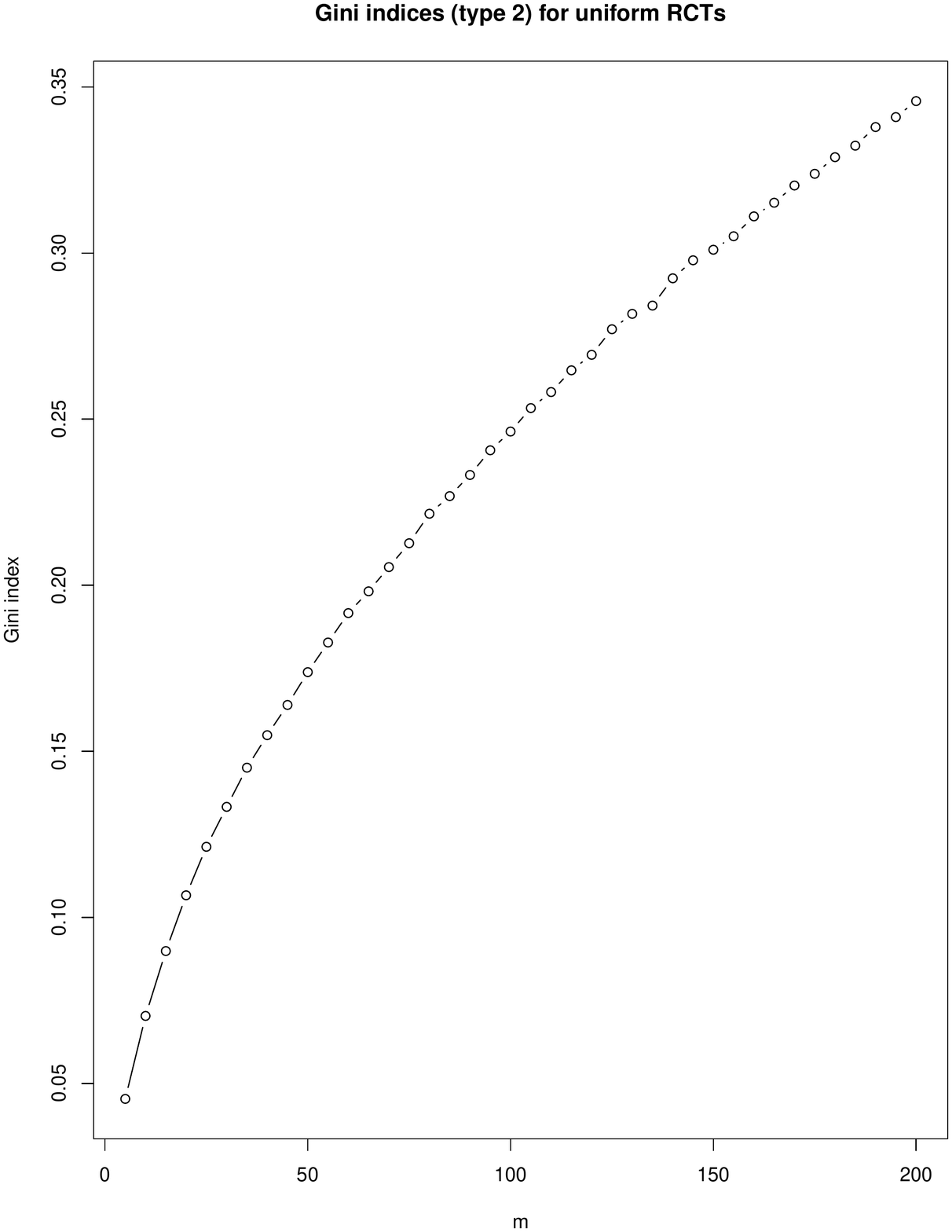}
			\caption{Simulated type {\rm II} Gini index for uniform RCTs at time $n = 500$; $R = 500$ and $m = 5, 10, \ldots, 200$.}
			\label{Fig:ginitwounifRCT}
		\end{minipage}
		\hfill
		\begin{minipage}[b]{0.45\textwidth}
			\includegraphics[width=\textwidth]{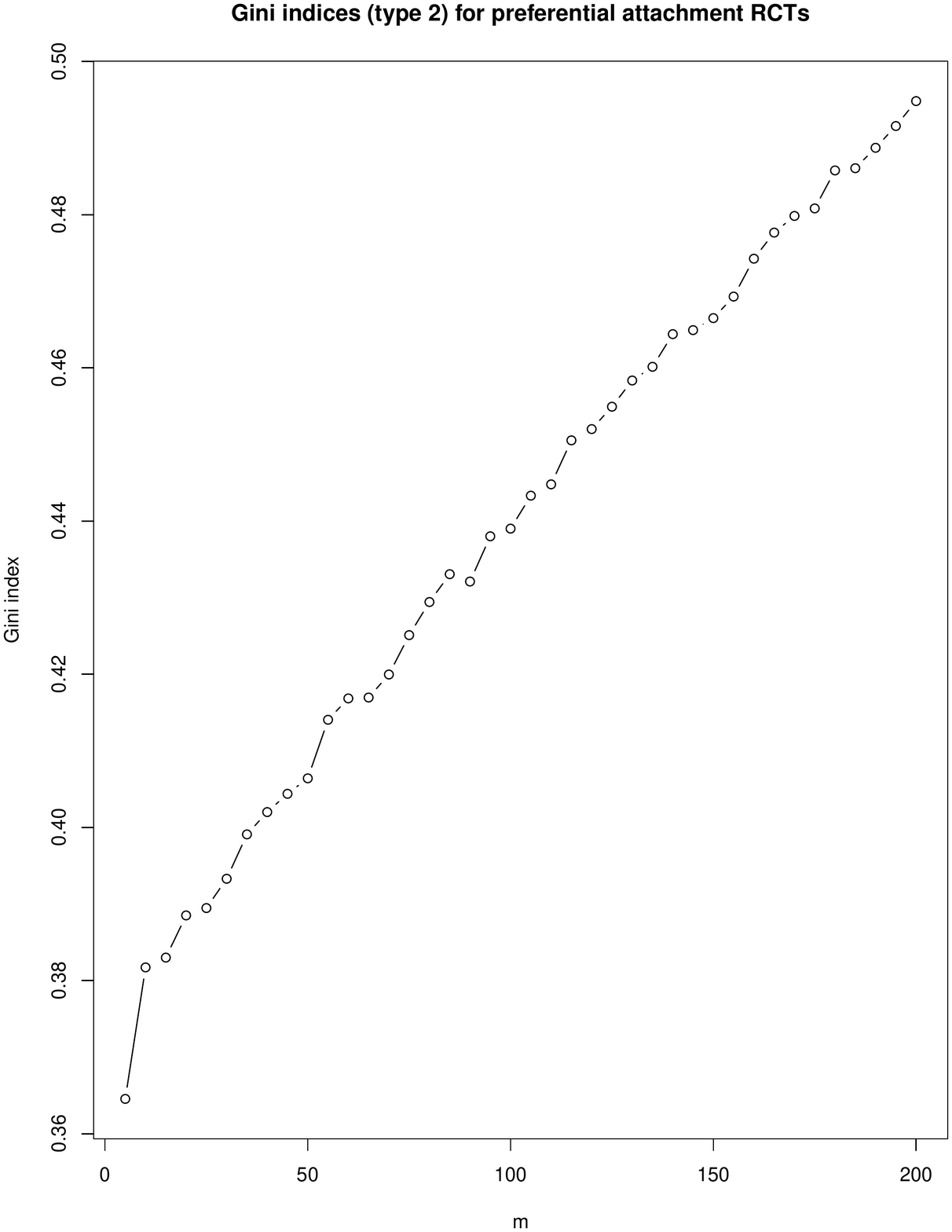}
			\caption{Simulated type {\rm II} Gini index for preferential attachment RCTs at time $n = 500$; $R = 500$ and $m = 5, 10, \ldots, 200$.}
			\label{Fig:ginitwopreferRCT}
		\end{minipage}
	\end{center}
\end{figure}

We discover that type {\rm II} Gini indicies of both classes of RCTs increase with respect to $m$ when time $n$ is large, and the speed of increase of type {\rm II} Gini index of uniform RCTs is much higher than that of preferential attachment RCTs. We make pairwise comparisons and conclude that type {\rm II} Gini index of preferential attachment RCTs is much large than that of uniform RCTs for the same value of $m$, which is also reflected in the Lorenz curves presented in Figure~\ref{Fig:lorenz}.

\begin{figure}[H]
	\begin{center}
		\includegraphics[width=\textwidth]{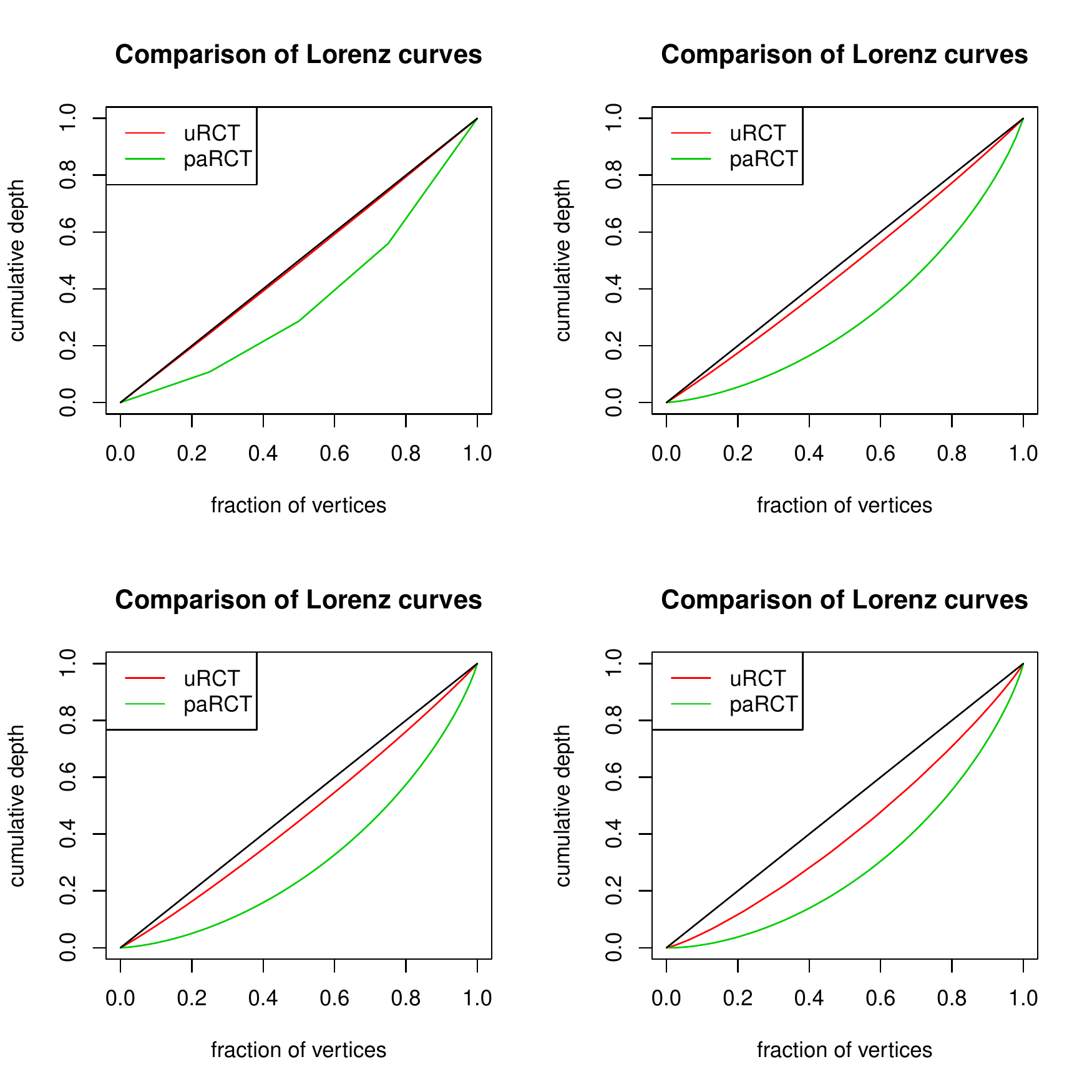}
		\caption{The Lorenz curves (for type {\rm II} Gini index) of uniform RCTs and preferential attachment RCTs for $m = 5, 50, 100, 500$ at time $n = 5000$; $R = 5000$.}
		\label{Fig:lorenz}
	\end{center}
\end{figure}

Our numerical results show that type {\rm II} Gini index of uniform RCTs is small in general when $m \ll n$. The phenomenon that type {\rm II} Gini index preferential attachment RCTs is larger than that of uniform RCTs (for fixed $m$) conforms to the evolution of these two classes of RCTs. The leaves are more likely to be evenly distributed among the $m$ vertices on the central path of uniform RCTs, whereas the leaves are inclined to being connected with the vertices with higher degrees in preferential attachment RCTs, which corresponds to the evolution of this class of RCTs. Conversely, the feature of the growth of preferential attachment RCTs strongly suggests inequality of the distribution of leaves, which is also reflected in the larger value of Gini index in our experiment. Therefore, we conclude that the proposed Gini index successfully characterizes the evolutionary behavior and distinguishes the structure of the two classes of RCTs considered in this paper, which makes it preferred to the one proposed in~\cite{Balaji}.

\section{Concluding remarks}
\label{Sec:conc}
To sum up, we study several properties of RCTs in this manuscript. We consider RCTs which grow in both uniform and nonuniform ways. For a special type of nonuniform RCTs---preferential attachment RCTs, we exploit a generalized \Polya-Eggenberger urn model to determine the exact joint distribution of the degree variables, as well as the asymptotic joint distribution. Multicolor \Polya-Eggenberger urns have been well studied. Three versions of bivariate distributions generated from \Polya-Eggenberger urns are discussed in~\cite{Marshall}. The urn model {\rm I} defined in~\cite[Section 3.1]{Marshall} is a special case of our model for $m = 3$. A general result of strong convergence in a multicolor \Polya\ urn model is given in~\cite{Gouet}, in which the asymptotic joint distribution for the proportions of different types of balls is determined. The asymptotic joint distribution in Corollary~\ref{Cor:asymdist} also can be obtained by applying the result in~\cite[Theorem 3.1]{Gouet}. In addition, we would like to point out that we are able to determine the asymptotic marginal distributions for $D_{i, n}/n$ for each $0 \le i \le m$ based on the fundamental property of Dirichlet distribution; that is,
$$\frac{D_{i, n}}{n} \convD {\rm Beta}(\tau_{i, n}, \tau_0 - \tau_{i, n}).$$ 
For a special case of $m = 2$, $D_{1, n}/n$ and $D_{2, n}/n$ both converge to uniform distributions on $(0, 1)$ asymptotically.

For the Gini index proposed in this manuscript, we are able to apply the estimation developed in~\cite{Gastwirth} to get
\begin{equation}
\hat{G}_{\rm II}(\mathcal{T}) = \frac{\sum_{i = 1}^{m} \sum_{j = 1}^{m} \E[|L_{i, n} - L_{j, n}|]}{2 m \sum_{i = 1}^{m} \E[L_{i, n}]} = \frac{\sum_{i = 1}^{m} \sum_{j = 1}^{m} \E[|L_{i, n} - L_{j, n}|]}{2 m n},
\label{Eq:gini2gen}
\end{equation}
where $L_{i, n}$ is the corresponding random variable with realization of $l_{i, n}$, for $i = 1, 2, \ldots, m$. In our future work, we would like to develop some rigorous estimators for $\E|L_{i, n} - L_{j, n}|$ in Equation~(\ref{Eq:gini2gen}). This is feasible for the uniform case, as $L_{i, n}$'s jointly follow a multinomial distribution. It is well known that an $m$-dimensional multinomial distribution can be approximated by an $(m - 1)$-dimensional multivariate normal. On the other hand, the exact joint distribution of $L_{i, n}$'s is not tractable for the preferential attachment model. Note that the asymptotic joint distribution of $L_{i, n}$'s can be determined by Corollary~\ref{Cor:asymdist}. One possible approach is to consider gamma representations of Dirichlet random variables and establish approximations from there. Further investigations will be conducted and the results will be presented elsewhere.

\section{Appendix}
\label{Sec:app}
\subsection{Type {\rm I} Gini index of uniform RCTs}
\label{Subsec:gini1u}
To compute $\E|d_u - d_v|$ and $\E[d_v]$ in Equation~(\ref{Eq:gini1gen}), we convert them into equivalent expressions in terms of $\E[L_{j, n}L_{i, n}]$ and $\E[L_{i, n}]$ and use the fact that $(L_{1, n}, L_{2, n}, \ldots, L_{m, n})$ follows a multinomial distribution; see Section~\ref{Subsec:URCT}. The estimate of type {\rm I} Gini index of uniform RCTs is given by
\begin{align*}
\hat{G}_{{\rm I}}\left(\mathcal{C}^{(U)}\right) &= \frac{\sum_{i = 1}^{m}\sum_{j = i}^{m} (j - i) \left(1 +　\E[L_{j, n} L_{i, n}]\right) + \sum_{i = 1}^{m}\sum_{j = i}^{m} (j - i + 1) \E[L_{j, n}]}{(m + n)^2\left[\sum_{i = 1}^{m}i\E\left[L_{i, n}\right] + \sum_{i = 1}^{m} (i - 1)\right]}
\\ &= \frac{(2m^2 - 2)n^2 + (m^3 + 4 m^2 - m + 2)n + 2 m^4 - 2m^2}{(6m^2 + 6m)n^2 + 12 m^3 n + 6 m^4 - 6 m^3}.
\end{align*}
\subsection{Type {\rm I} Gini index of preferential attachment RCTs}
\label{Subsec:gini1p}
We perform a similar computation to estimate type {\rm I} Gini index of preferential attachment RCTs. The moments of $L_{i, n}$ and mixed moments of $L_{j, n}L_{i, n}$ can be easily obtained by the results in Proposition~\ref{Prop:mom}. We thus have
\begin{align*}
\hat{G}_{{\rm I}}\left(\mathcal{C}^{(P)}\right) &= \frac{\sum_{i = 1}^{m}\sum_{j = 1}^{m} |j - i| \left(1 +　\E[L_{j, n} L_{i, n}]\right) + \sum_{i = 1}^{m}\sum_{j = 1}^{m} (|j - i| + 1) \E[L_{j, n}]}{2 (m + n)^2\left[\sum_{i = 1}^{m}i\E\left[L_{i, n}\right] + \sum_{i = 1}^{m} (i - 1)\right]}
\\ &= \frac{1}{6(2m - 1)(m - 1)(n + m)(mn + n + m^2 - m)} 
\\ &\qquad{} \times[2(m - 1)(2m^2 - 7m + 9)n^2 + (4 m^4 - 12 m^3 + 23 m^2 - 9m + 6)n
\\ &\qquad\qquad{} + 2m(m - 1)^2(2m - 1)(m + 1)].
\end{align*}



\end{document}